\newcommand{\T}{{\cal T}}
\newcommand{\set}[1]{\left\{#1\right\}}
\newcommand{\p}{\pi^{-1}(TM)}
\newcommand {\cp}{\mathfrak{X}(\pi (M))}
\def\Section#1{\vspace{30truept}\addtocounter{section}{1}\setcounter{thm}{0}
\setcounter{equation}{0}{\noindent\Large\bf
    \arabic{section}.~~#1}\par \vspace{12pt}}
\newtheorem{thm}{Theorem}[section]
\newtheorem{lem}[thm]{Lemma}
\newtheorem{prop}[thm]{Proposition}
\newtheorem{defn}[thm]{Definition}
\newtheorem{rem}[thm]{Remark}
\numberwithin{equation}{section}
\begin{document}
\title{{\textbf{On Concircularly Recurrent Finsler  Manifolds}}}   
\author{{\bf Nabil L. Youssef$^{\,1, 2}$ and A. Soleiman$^{3}$}}
\date{}

\maketitle                     
\vspace{-1.15cm}
\begin{center}
{$^{1}$Department of Mathematics, Faculty of Science,\\ Cairo
University, Giza, Egypt}
\end{center}
\vspace{-0.6cm}
\begin{center}
{$^{2}$ Center of Theoretical Physics (CTP) \\at the British
University in Egypt (BUE)}
\end{center}
\vspace{-0.6cm}
\begin{center}
{$^{3}$Department of Mathematics, Faculty of Science,\\ Benha
University, Benha, Egypt}
\end{center}
\vspace{-0.6cm}
\begin{center}
E-mails: nlyoussef@sci.cu.edu.eg, nlyoussef2003@yahoo.fr\\
  {\hspace{2.3cm}}  amr.hassan@fsci.bu.edu.eg, amrsoleiman@yahoo.com
\end{center}
\smallskip

\vspace{1cm} \maketitle
\smallskip


\noindent{\bf Abstract.}
Two special Finsler spaces have been introduced and investigated, namely $R^h$-recurrent Finsler space and consircularly recurrent Finsler space. The defining properties of these spaces are formulated in terms of the first curvature tensor of Cartan connection.
The following three results constitute the main object of the present paper:
1. A concircularly flat Finsler manifold is necessarily of constant curvature (Theorem A);
2. Every $R^h$-recurrent Finsler manifold is concirculaly recurrent with the same recurrence form (Theorem B);
3. Every horizontally integrable concircularly recurrent Finsler manifold is $R^h$-recurrent with the same recurrence form (Theorem C).
The whole work is formulated in a coordinate-free
form.

\bigskip
\medskip\noindent{\bf Keywords:\/}\,  Cartan connection, Concircular
curvature tensor,  Concircularly flat,  Concircularly recurrent
Finsler manifold, $R^h$-recurrent Finsler manifold, $R^{h}$-symmetric
Finslar manifold.

\bigskip
\medskip\noindent{\bf MSC 2010}: 53C60,
53B40, 58B20.
\newpage


\centerline{\Large\bf{Introduction}}\vspace{12pt}
\par
In this paper, we present an intrinsic investigation
of concircularly recurrent Finsler manifolds. 
The paper is organized in the following manner. 

In section 1, following the introduction, we give a brief account of the basic
concepts, definitions and results that will be needed in the sequel. 

In section 2, an important tensor field associated to a Finsler manifold, called the concircular curvature tensor, is defined. A necessary and sufficient condition for the vanishing of the concircular curvature tensor is found (Proposition 2.4). We also prove that a concircularly flat Finsler manifold is necessairly of constant curvature (Theorem A).

In section 3, two special Finsler spaces have been introduced and investigated, namely $R^h$-recurrent Finsler space and consircularly recurrent Finsler space. The defining properties of these spaces are formulated in terms of the first curvature tensor of Cartan connection.
Then, we prove that every $R^h$-recurrent Finsler manifold is concircularly recurrent with the same recurrence form (Theorems B).
The converse of the above result is not true in general.
However, it has been recently proved to be true in Riemannian geometry \cite{Olszak}.
For the converse of Theorem B to be true in the Finslerian context, an additional
condition is needed, namely the horizontal integrability condition.
We thus prove that every horizontally integrable concircularly recurrent Finsler manifold is $R^h$-recurrent with the same recurrence form (Theorem~C). This is the third and most important result of the paper.

Finally, it should be pointed out that the present work is
formulated in a coordinate-free form. 


\Section{Notation and Preliminaries}

In this section, we give a brief account of the basic concepts
 of the pullback approach to intrinsic Finsler geometry necessary for this work. For more
 details, we refer to \cite{r58}, \cite{r86} and \cite{r94}. We
 shall use the same notations of \cite{r86}.

 In what follows, we denote by $\pi: \T M\longrightarrow M$ the subbundle of nonzero vectors
tangent to $M$, $\mathfrak{F}(TM)$ the algebra of $C^\infty$ functions on $TM$, $\cp$ the $\mathfrak{F}(TM)$-module of differentiable sections of the pullback bundle $\pi^{-1}(T M)$.
The elements of $\mathfrak{X}(\pi (M))$ will be called $\pi$-vector
fields and will be denoted by barred letters $\overline{X} $. The
tensor fields on $\pi^{-1}(TM)$ will be called $\pi$-tensor fields.
The fundamental $\pi$-vector field is the $\pi$-vector field
$\overline{\eta}$ defined by $\overline{\eta}(u)=(u,u)$ for all
$u\in TM$.
\par
We have the following short exact sequence of vector bundles
$$0\longrightarrow
 \pi^{-1}(TM)\stackrel{\gamma}\longrightarrow T(\T M)\stackrel{\rho}\longrightarrow
\pi^{-1}(TM)\longrightarrow 0 ,\vspace{-0.1cm}$$ with the well known
definitions of  the bundle morphisms $\rho$ and $\gamma$. The vector
space $V_u (\T M)= \{ X \in T_u (\T M) : d\pi(X)=0 \}$  is the vertical space to $M$ at $u$.
\par
Let $D$ be  a linear connection on the pullback bundle $\pi^{-1}(TM)$.
 We associate with $D$ the map \vspace{-0.1cm} $K:T \T M\longrightarrow
\pi^{-1}(TM):X\longmapsto D_X \overline{\eta} ,$ called the
connection map of $D$.  The vector space $H_u (\T M)= \{ X \in T_u
(\T M) : K(X)=0 \}$ is called the horizontal space to $M$ at $u$ .
   The connection $D$ is said to be regular if
$$ T_u (\T M)=V_u (\T M)\oplus H_u (\T M) \,\,\,  \forall \, u\in \T M.$$

If $M$ is endowed with a regular connection, then the vector bundle
   maps $
 \gamma,\, \rho |_{H(\T M)}$ and $K |_{V(\T M)}$
 are vector bundle isomorphisms. The map
 $\beta:=(\rho |_{H(\T M)})^{-1}$
 will be called the horizontal map of the connection
$D$.
\par
 The horizontal ((h)h-) and
mixed ((h)hv-) torsion tensors of $D$, denoted by $Q $ and $ T $
respectively, are defined by \vspace{-0.2cm}
$$Q (\overline{X},\overline{Y})=\textbf{T}(\beta \overline{X}\beta \overline{Y}),
\, \,\,\, T(\overline{X},\overline{Y})=\textbf{T}(\gamma
\overline{X},\beta \overline{Y}) \quad \forall \,
\overline{X},\overline{Y}\in\mathfrak{X} (\pi (M)),\vspace{-0.2cm}$$
where $\textbf{T}$ is the (classical) torsion tensor field
associated with $D$.
\par
The horizontal (h-), mixed (hv-) and vertical (v-) curvature tensors
of $D$, denoted by $R$, $P$ and $S$
respectively, are defined by
$$R(\overline{X},\overline{Y})\overline{Z}=\textbf{K}(\beta
\overline{X}\beta \overline{Y})\overline{Z},\quad
 {P}(\overline{X},\overline{Y})\overline{Z}=\textbf{K}(\beta
\overline{X},\gamma \overline{Y})\overline{Z},\quad
 {S}(\overline{X},\overline{Y})\overline{Z}=\textbf{K}(\gamma
\overline{X},\gamma \overline{Y})\overline{Z}, $$
 where $\textbf{K}$
is the (classical) curvature tensor field associated with $D$.
\par
The contracted curvature tensors of $D$, denoted by $\widehat{
{R}}$, $\widehat{ {P}}$ and $\widehat{ {S}}$ respectively, known
also as the
 (v)h-, (v)hv- and (v)v-torsion tensors, are defined by
$$\widehat{ {R}}(\overline{X},\overline{Y})={ {R}}(\overline{X},\overline{Y})\overline{\eta},\quad
\widehat{ {P}}(\overline{X},\overline{Y})={
{P}}(\overline{X},\overline{Y})\overline{\eta},\quad \widehat{
{S}}(\overline{X},\overline{Y})={
{S}}(\overline{X},\overline{Y})\overline{\eta}.$$
\par
If $M$ is endowed with a metric $g$ on $\p$, we write
\begin{equation}\label{cur.g}
   \textbf{ R}(\overline{X},\overline{Y},\overline{Z}, \overline{W}):
=g( {R}(\overline{X},\overline{Y})\overline{Z}, \overline{W}),\,
\cdots, \, \textbf{S}(\overline{X},\overline{Y},\overline{Z},
\overline{W}): =g( {S}(\overline{X},\overline{Y})\overline{Z},
\overline{W})
\end{equation}
\par
The following result is of extreme importance. \vspace{-0.1cm}
\begin{thm} {\em\cite{r94}} \label{th.1} Let $(M,L)$ be a Finsler
manifold and  $g$ the Finsler metric defined by $L$. There exists a
unique regular connection $\nabla$ on $\pi^{-1}(TM)$ such
that\vspace{-0.2cm}
\begin{description}
  \item[(a)]  $\nabla$ is  metric\,{\em:} $\nabla g=0$,

  \item[(b)] The (h)h-torsion of $\nabla$ vanishes\,{\em:} $Q=0
  $,
  \item[(c)] The (h)hv-torsion $T$ of $\nabla$\, satisfies\,\emph{:}
   $g(T(\overline{X},\overline{Y}), \overline{Z})=g(T(\overline{X},\overline{Z}),\overline{Y})$.
\end{description}
\par
 Such a connection is called the Cartan
connection associated with  the Finsler manifold $(M,L)$.
\end{thm}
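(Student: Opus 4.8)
The plan is to imitate the classical Koszul-formula proof of the fundamental theorem of (pseudo-)Riemannian geometry, adapting it to the horizontal--vertical splitting $T_u(\T M)=V_u(\T M)\oplus H_u(\T M)$ supplied by regularity. Writing $\nabla^h_{\overline X}:=\nabla_{\beta\overline X}$ and $\nabla^v_{\overline X}:=\nabla_{\gamma\overline X}$ for the horizontal and vertical covariant derivatives, I would first reformulate the three defining conditions in terms of these two pieces. Evaluating the metric condition (a) separately on horizontal and on vertical arguments splits it into an h-metric identity $(\beta\overline X)g(\overline Y,\overline Z)=g(\nabla^h_{\overline X}\overline Y,\overline Z)+g(\overline Y,\nabla^h_{\overline X}\overline Z)$ and its vertical analogue. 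Unwinding the definitions of $Q$ and $T$ shows that (b) reads $\nabla^h_{\overline X}\overline Y-\nabla^h_{\overline Y}\overline X=\rho[\beta\overline X,\beta\overline Y]$, a constraint on $\nabla^h$ alone, while $T(\overline X,\overline Y)=\nabla^v_{\overline X}\overline Y-\rho[\gamma\overline X,\beta\overline Y]$. The strategy is then to determine the vertical and horizontal parts in turn, proving uniqueness first (each part obeys an overdetermined linear system that a Koszul-type symmetrization solves) and then existence.

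I would dispose of the vertical part first, because it decouples cleanly. The crucial observation is that $\rho[\gamma\overline X,\beta\overline Y]$ is independent of the horizontal map: the nonlinear-connection contribution to this bracket is itself vertical and is annihilated by $\rho$, so only the vertical derivative of the components of $\overline Y$ survives. Hence condition (c) is a genuine constraint on $\nabla^v$ alone. Combining it with the v-metric identity and symmetrizing exactly as in the Koszul argument determines $g(\nabla^v_{\overline X}\overline Y,\overline Z)$, and hence $\nabla^v_{\overline X}\overline Y$ by nondegeneracy of $g$. The outcome is the totally symmetric tensor that in coordinates is the Cartan tensor $\tfrac12\paa g$, and it is fixed canonically, with no reference to the nonlinear connection.

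The horizontal part is harder. The h-metric identity together with (b), cyclically permuted and combined with alternating signs, yields a Koszul formula for $g(\nabla^h_{\overline X}\overline Y,\overline Z)$ in terms of the derivatives $(\beta\,\cdot)\,g(\cdot,\cdot)$ and the projected brackets $\rho[\beta\,\cdot,\beta\,\cdot]$. Unlike the Riemannian case, this does not yet conclude, because the horizontal map $\beta$ — equivalently the nonlinear connection cutting out $H(\T M)=\ker K$ with $K=\nabla_{(\cdot)}\overline\eta$ — is itself part of the unknown connection. To close the system I would feed the fundamental section $\overline\eta$ into these formulas: since $\beta\overline X$ is horizontal, $\nabla^h_{\overline X}\overline\eta=K(\beta\overline X)=0$, and substituting $\overline Y=\overline\eta$ into (b) and into the Koszul formula produces equations that pin down the horizontal map in terms of $g$ and its brackets.

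The main obstacle is precisely this coupling: the linear covariant derivative and the nonlinear connection are entangled, whereas in Riemannian geometry the splitting is fixed once and for all. Disentangling them — showing that the implicit system for $\beta$ and $\nabla^h$ has a unique solution — is the heart of the proof, and the tool that breaks the implicitness is the homogeneity of the Finsler metric (Euler's relations for $g$ and for $\overline\eta$), which forces the $\overline\eta$-contraction of the Cartan tensor to vanish and renders the $\overline\eta$-substituted equations solvable. Once uniqueness is established, existence is the routine converse: one defines $\nabla^v$ and $\nabla^h$ by the Koszul formulas just obtained, together with the nonlinear connection they determine, and checks directly that the resulting $\nabla$ is $\Real$-bilinear, satisfies the Leibniz rule, is regular, and fulfills (a), (b) and (c). I expect this last verification to be computational but without conceptual difficulty.
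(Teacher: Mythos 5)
You should first note that the paper contains no proof of this statement: Theorem 1 is imported verbatim from \cite{r94}, so there is no in-text argument to compare yours against. Measured against the construction in that source, your route is genuinely different. There the nonlinear connection is not extracted from the Koszul system at all: one first obtains the canonical spray and the associated Barthel (horizontal) map from the energy $E=\frac{1}{2}L^{2}$ and the form $dd_{J}E$, and only then defines the linear connection on $\pi^{-1}(TM)$ by explicit vertical and horizontal formulas, verifying (a)--(c) and uniqueness afterwards. Your observation that the vertical part decouples --- that $\rho[\gamma\overline{X},\beta\overline{Y}]$ is insensitive to the choice of horizontal complement (adding a vertical field to $\beta\overline{Y}$ changes the bracket by a vertical field, which $\rho$ kills), so that v-metricity together with (c) forces $T$ to be the Cartan tensor --- is correct and is a clean way to organize that half; it also yields $T(\overline{X},\overline{\eta})=0$ and hence $K\circ\gamma=\mathrm{id}$, which you need for regularity.

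The gap sits exactly where you locate ``the heart of the proof.'' Having written the horizontal Koszul formula and substituted $\overline{Y}=\overline{\eta}$, you obtain a system that is implicit in $\beta$: the unknown nonlinear connection enters both through the derivations $\beta\overline{X}$ acting on $g(\cdot,\cdot)$ and through the brackets $\rho[\beta\,\cdot,\beta\,\cdot]$, and you assert, but do not show, that Euler homogeneity renders this system uniquely solvable. That assertion is the entire content of the theorem: a priori any horizontal distribution fed into the Koszul formula produces an h-metric, (h)h-torsion-free $\nabla^{h}$, and the issue is whether exactly one choice is reproduced as $\ker(\nabla\overline{\eta})$ of the connection it generates. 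Establishing this requires an actual computation (locally, the derivation of $N^{i}_{\,j}=\dot\partial_{j}G^{i}$ from the $\overline{\eta}$-contracted equations), and ``I expect this to be computational but without conceptual difficulty'' is not a substitute for it --- existence and uniqueness both hinge on that step. Either carry out the solution of the implicit system explicitly, or follow \cite{r94} and obtain the horizontal map independently from the spray before introducing the linear connection.
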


On a Finsler manifold there are other important linear connections. However, the only linear connection we treat in this paper is the Cartan connection.
For a Finsler manifold  $(M,L)$, we
define the following geometric objects:
\begin{eqnarray*}
\ell &:=&L^{-1}i_{\overline{\eta}}\:g,\\
\hbar &:=& g-\ell \otimes \ell: \text{the angular metric tensor},\\
\phi &:& \text{the vector $\pi$-form associated with $\hbar$}; \,\,\, i_{\phi(\overline{X})}\,g:=i_{\overline{X}}\:\hbar\\
\stackrel{h}\nabla&:&\text{the $h$-covariant derivatives associated
with the Cartan connection},\\
\stackrel{v}\nabla &:&\text{the $v$-covariant derivatives associated
with the Cartan connection},\\
T&:& \text{the  Cartan tensor}; \,\,\, T(\overline{X},\overline{Y},\overline{Z}):=g(T(\overline{X},\overline{Y}),\overline{Z})\\
H&:=& i_{\overline{\eta}}\:\widehat{R}:  \text{the deviation tensor},\\
 Ric&:& \text{the  horizontal  Ricci tensor of Cartan connection
 },\\
 r &:& \text{the horizontal scalar curvature of Cartan connection }.
\end{eqnarray*}


\Section{Concircularly Flat Finsler Manifold}

\begin{defn}\label{def.3} \emph{\cite{r86}, \cite{r65} }A Finsler manifold $(M,L)$ of dimension $n\geq3$
is said to be $h$-isotropic if there exists a scalar function
$k_{o}\neq0$ such that the horizontal curvature tensor ${R}$ has the
form:
$$ {R}=k_{o}\, G,$$
where $G$ is the $\pi$-tensor field defined by
\begin{equation}\label{G}
G(\overline{X},\overline{Y})\overline{Z}:=g(\overline{X},\overline{Z})
\overline{Y}-g(\overline{Y},\overline{Z})\overline{X}.
\end{equation}
\end{defn}

\begin{defn}\label{sca.}{\em{\cite{r86}}, \cite{nr1}}  A Finsler manifold $(M,L)$ of dimension $n\geq 3$ is
said to be  of scalar curvature  if the deviation tensor
$H:=i_{\overline{\eta}}\:\widehat{R}$ satisfies
 $$H=\varepsilon L^{2} \phi, $$
  where $\varepsilon$ is a  scalar function on $T
  M$, positively homogenous of degree zero in $y$.
\par In particular, if the scalar function $\varepsilon$ is constant, then $(M,L)$ is said to be of constant curvature.
\end{defn}

Let us now introduce the notion of concircular curvture.
\vspace{-5pt}
\begin{defn}\label{def.1}
Let $(M,L)$ be a Finsler manifold of dimension $n\geq3$. The
$\pi$-tensor field $ {C}$
 defined by
 $$ {C}:= {R}-\frac{r}{n(n-1)}\, {G}$$
will be called the concircular curvature tensor, ${G}$ being the $\pi$-tensor field defined by (\ref{G}).
\par If the concircular curvature tensor ${C}$ vanishes, then $(M,L)$ is
said to be concircularly flat.
\end{defn}

It should be noted that the concircular curvature tensor in \emph{Riemannian geometry} has been thoroughly investigated by many authors. The above definition is a generalization to Finsler geometry of that tensor field.

\begin{prop}\label{th.1} A Finsler manifold $(M,L)$ is  concircularly flat if,
and only if, $(M,L)$ is $h$-isotropic.
\end{prop}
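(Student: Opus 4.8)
The plan is to prove both implications by reducing each to the single scalar identity $k_{o}=\dfrac{r}{n(n-1)}$; the bridge between the two notions is the elementary fact that taking two successive $g$-traces of the tensor field $G$ of (\ref{G}) produces exactly the normalizing factor $n(n-1)$.

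First I would show that $h$-isotropy implies concircular flatness. Assume $(M,L)$ is $h$-isotropic, so that $R=k_{o}\,G$ with $k_{o}\neq0$. Fixing a local $g$-orthonormal $\pi$-frame $\{\overline{e}_{i}\}_{i=1}^{n}$ of $\pi^{-1}(TM)$ and contracting the defining relation (\ref{G}), one finds that the horizontal Ricci tensor of the Cartan connection is $Ric=(n-1)k_{o}\,g$, and a further $g$-trace gives the horizontal scalar curvature $r=n(n-1)k_{o}$. Hence $\dfrac{r}{n(n-1)}=k_{o}$, and substituting into Definition \ref{def.1} yields $C=R-\dfrac{r}{n(n-1)}\,G=k_{o}G-k_{o}G=0$; thus $(M,L)$ is concircularly flat.

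For the converse, assume $(M,L)$ is concircularly flat, i.e. $C=0$. By Definition \ref{def.1} this is literally the statement $R=\dfrac{r}{n(n-1)}\,G$, so setting $k_{o}:=\dfrac{r}{n(n-1)}$ already exhibits $R$ in the $h$-isotropic form $R=k_{o}G$. The only clause demanding attention is the requirement $k_{o}\neq0$ in Definition \ref{def.3}: contracting $R=k_{o}G$ as above reproduces $r=n(n-1)k_{o}$, so $k_{o}$ vanishes precisely when $r=0$, and in that case $R\equiv0$. Away from this trivial (flat) locus the scalar $k_{o}=\dfrac{r}{n(n-1)}$ is nonzero and furnishes exactly the datum required by the definition of $h$-isotropy, completing the equivalence.

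The entire argument thus rests on one contraction computation, and the main obstacle is bookkeeping rather than ideas: one must apply the precise conventions by which the paper defines $Ric$ and $r$ so that the two traces of $G$ deliver the factor $n(n-1)$ with the correct sign, guaranteeing that the coefficient cancels against the denominator in the concircular tensor. Note that no curvature symmetries or integrability hypotheses enter, and the assumption $n\geq3$ is used only to ensure that $\dfrac{r}{n(n-1)}$ is well defined.
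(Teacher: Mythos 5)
Your proof is correct and follows essentially the same route as the paper: the direction from concircular flatness to $h$-isotropy is immediate from Definition \ref{def.1} with $k_{o}=\frac{r}{n(n-1)}$, and the converse is the same double-trace computation giving $Ric=(n-1)k_{o}\,g$ and then $r=n(n-1)k_{o}$. Your extra remark about the clause $k_{o}\neq 0$ is a reasonable observation that the paper passes over silently, but it does not change the argument.
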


\begin{proof} It is clear that if $(M,L)$ is concircularly flat,
then it is $h$-isotropic (with $k_{o}=\frac{r}{n(n-1)}$ in Definition \ref{def.3}).
\par Conversely, suppose that $(M,L)$ be $h$-isotropic. Then,  by
Definition \ref{def.3}, we have
\begin{equation}\label{eq.a}
{R}(\overline{X},\overline{Y})\overline{Z}=k_{o} \set{g(\overline{X},\overline{Z})
\overline{Y}-g(\overline{Y},\overline{Z})\overline{X}}
\end{equation}
Taking the   trace with respect to $\overline{Y}$ of the above
relation, we get
$$Ric(\overline{X},\overline{Z})=k_{o}\set{ng(\overline{X},\overline{Z})
-g(\overline{X},\overline{Z})}.$$
This equation, again, by taking the
  trace with respect to the pair of arguments
$\overline{X}$ and $\overline{Z}$, reduces to
 $$ k_{o}=\frac{r}{n(n-1)}.$$
From which, taking into account (\ref{eq.a}) and Definition
\ref{def.1}, $(M,L)$ is therefore concirculary flat.
\end{proof}

\vspace{5pt}
The following theorem is one of the main results of the present paper.

\vspace{5pt}

 \noindent\textbf{Theorem A.} \label{th.3}  \emph{A concircularly flat Finsler manifold is necessarily of constant\linebreak
curvature.}

\vspace{5pt}

To prove this theorem we need the following three lemmas.
\begin{lem}\label{ell}
For  a Finsler manifold $(M,L)$, we have:
\begin{description}
 \item[{\textbf{(a)}}]
$\stackrel{h}{\nabla}L=0,\quad\stackrel{v}{\nabla}L=\ell.$

 \item[{\textbf{(b)}}]
$\stackrel{h}{\nabla}\ell=0,
\quad\stackrel{v}{\nabla}\ell=L^{-1}\hbar.$

 \item[{\textbf{(c)}}]
$i_{\overline{\eta}}\,\ell=L, \quad i_{\overline{\eta}}\hbar=0.$

\item[{\textbf{(d)}}]
$\phi=I-L^{-1}\ell\otimes \overline{\eta}.$
\end{description}
\end{lem}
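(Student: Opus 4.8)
The plan is to establish each of the four identities by reducing them to the defining properties of the geometric objects (namely $\ell$, $\hbar$, $\phi$) combined with the metric‐compatibility of the Cartan connection (Theorem~\ref{th.1}(a)) and the basic facts $i_{\overline{\eta}}\,g = L\,\ell$ and $\stackrel{v}{\nabla}\overline{\eta} = I$, $\stackrel{h}{\nabla}\overline{\eta} = 0$, which encode that $\overline{\eta}$ is the canonical section recovered by the connection map $K$. The four parts are logically ordered: (c) and (d) are essentially algebraic unpackings of the definitions, (a) follows from homogeneity and metricity, and (b) is then a short consequence of (a) together with the product rule.

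First I would prove (c), since it requires no differentiation. The identity $i_{\overline{\eta}}\,\ell = L$ follows directly from $\ell = L^{-1} i_{\overline{\eta}}\,g$: contracting again with $\overline{\eta}$ gives $L^{-1} g(\overline{\eta},\overline{\eta}) = L^{-1} L^{2} = L$, using that $g(\overline{\eta},\overline{\eta}) = L^{2}$ is the defining relation between the metric and the fundamental function. Then $i_{\overline{\eta}}\hbar = i_{\overline{\eta}}(g - \ell\otimes\ell) = i_{\overline{\eta}}\,g - (i_{\overline{\eta}}\,\ell)\,\ell = L\,\ell - L\,\ell = 0$. Next, for (d), I would start from the defining property of $\phi$, namely $i_{\phi(\overline{X})}\,g = i_{\overline{X}}\,\hbar$, and compute the right‐hand side: $i_{\overline{X}}\hbar = i_{\overline{X}}\,g - (\ell(\overline{X}))\,\ell$. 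Writing $\ell(\overline{X}) = L^{-1} g(\overline{\eta},\overline{X})$ and $\ell = L^{-1} i_{\overline{\eta}}\,g$, the correction term becomes $L^{-2} g(\overline{\eta},\overline{X})\, i_{\overline{\eta}}\,g = i_{L^{-2} g(\overline{X},\overline{\eta})\,\overline{\eta}}\,g$, so that $i_{\phi(\overline{X})}\,g = i_{\overline{X} - L^{-2} g(\overline{X},\overline{\eta})\overline{\eta}}\,g$. Since $g$ is nondegenerate I may cancel it, obtaining $\phi(\overline{X}) = \overline{X} - L^{-2} g(\overline{X},\overline{\eta})\,\overline{\eta} = (I - L^{-1}\ell\otimes\overline{\eta})(\overline{X})$, which is exactly (d).

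For (a), the claim $\stackrel{v}{\nabla}L = \ell$ I would derive by $v$-differentiating $L^{2} = g(\overline{\eta},\overline{\eta})$: metricity gives $\stackrel{v}{\nabla}_{\overline{X}}\big(g(\overline{\eta},\overline{\eta})\big) = 2\,g(\stackrel{v}{\nabla}_{\overline{X}}\overline{\eta},\overline{\eta}) = 2\,g(\overline{X},\overline{\eta})$, so $2L\,(\stackrel{v}{\nabla}_{\overline{X}}L) = 2\,g(\overline{X},\overline{\eta})$, whence $\stackrel{v}{\nabla}_{\overline{X}}L = L^{-1}g(\overline{\eta},\overline{X}) = \ell(\overline{X})$. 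The identity $\stackrel{h}{\nabla}L = 0$ follows from the same $h$-differentiation using $\stackrel{h}{\nabla}\overline{\eta} = 0$ (equivalently, $L$ is constant along horizontal directions, a standard consequence of the connection being a Cartan connection). Finally, for (b), I would $h$- and $v$-differentiate the definition $\ell = L^{-1}i_{\overline{\eta}}\,g$ using the product rule, metricity ($\stackrel{h}{\nabla}g = \stackrel{v}{\nabla}g = 0$), part (a), and the covariant derivatives of $\overline{\eta}$; the $h$-case collapses to $0$ and the $v$-case produces $-L^{-2}\ell\otimes i_{\overline{\eta}}g + L^{-1}i_{(\stackrel{v}{\nabla}\overline{\eta})}g = -L^{-1}\ell\otimes\ell + L^{-1}g = L^{-1}(g - \ell\otimes\ell) = L^{-1}\hbar$.

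The only genuinely delicate points are the input facts about $\overline{\eta}$: that $\stackrel{v}{\nabla}_{\overline{X}}\overline{\eta} = \overline{X}$ (i.e. the connection map reproduces the identity on the vertical part) and $\stackrel{h}{\nabla}\overline{\eta} = 0$, together with $g(\overline{\eta},\overline{\eta}) = L^{2}$. These are not re-derived here but are standard structural properties of the Cartan connection recorded in the references \cite{r86}, \cite{r94}; granting them, every step above is a routine application of metric compatibility and the product rule, so the main obstacle is purely bookkeeping rather than conceptual.
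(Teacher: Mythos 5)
Your proof is correct, but there is nothing in the paper to compare it against: the authors state Lemma \ref{ell} without proof, treating it as standard background (it is essentially quoted from their references \cite{r86}, \cite{r94}, \cite{r96}). Your argument fills that gap cleanly, and the logical order you chose --- (c) and (d) as purely algebraic consequences of the definitions $\ell=L^{-1}i_{\overline{\eta}}g$, $\hbar=g-\ell\otimes\ell$, $i_{\phi(\overline{X})}g=i_{\overline{X}}\hbar$, then (a) by differentiating $g(\overline{\eta},\overline{\eta})=L^{2}$, then (b) by the product rule --- is the natural one. One remark on the facts you flag as ``genuinely delicate'': within this paper's own formalism they are immediate rather than delicate. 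The connection map is defined by $K(X)=\nabla_{X}\overline{\eta}$ and the horizontal space by $K(X)=0$, so $\nabla_{\beta\overline{X}}\overline{\eta}=K(\beta\overline{X})=0$ by definition, and $\nabla_{\gamma\overline{X}}\overline{\eta}=\overline{X}$ is exactly the identity $K\circ\gamma=id_{\cp}$ that the authors themselves invoke in the proof of Theorem A; only $g(\overline{\eta},\overline{\eta})=L^{2}$ (Euler homogeneity of the vertical Hessian) is a genuinely external input. With that observation your derivation is self-contained modulo the definition of the Finsler metric, and each step (metricity, nondegeneracy of $g$ to cancel $i_{(\cdot)}g$ in part (d), and the product rule) checks out.
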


\begin{lem}\label{lem.2} For a concircularly flat Finsler manifold $(M,L)$,
we have
$$\mathfrak{S}_{\overline{X},\overline{Y},\overline{Z}} {R}(\overline{X},\overline{Y})\overline{Z}=0.$$
\end{lem}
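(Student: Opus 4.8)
The plan is to reduce this cyclic (first-Bianchi-type) sum to a purely algebraic identity for the model tensor $G$ and then annihilate it using the symmetry of the metric $g$. First I would use the hypothesis directly: by Definition~\ref{def.1}, concircular flatness means $C=0$, that is
$$R=\frac{r}{n(n-1)}\,G.$$
Writing $k_o:=\frac{r}{n(n-1)}$ (a scalar function), this reads, via (\ref{G}),
$$R(\overline{X},\overline{Y})\overline{Z}=k_o\set{g(\overline{X},\overline{Z})\overline{Y}-g(\overline{Y},\overline{Z})\overline{X}}.$$
Since $k_o$ is a scalar it factors out of the cyclic sum, so it suffices to show that $\mathfrak{S}_{\overline{X},\overline{Y},\overline{Z}}\,G(\overline{X},\overline{Y})\overline{Z}=0$.

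Next I would expand the three cyclic terms obtained from (\ref{G}) under the rotation $\overline{X}\to\overline{Y}\to\overline{Z}\to\overline{X}$ and collect the six resulting summands according to which of $\overline{X}$, $\overline{Y}$, $\overline{Z}$ each of them multiplies. Each of the three coefficients is then a difference of the form $g(\overline{A},\overline{B})-g(\overline{B},\overline{A})$, which vanishes because $g$ is symmetric. Hence $\mathfrak{S}\,G=0$, and consequently $\mathfrak{S}_{\overline{X},\overline{Y},\overline{Z}}\,R(\overline{X},\overline{Y})\overline{Z}=k_o\,\mathfrak{S}_{\overline{X},\overline{Y},\overline{Z}}\,G(\overline{X},\overline{Y})\overline{Z}=0$, as required.

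There is no genuine analytic obstacle here: the statement is an algebraic consequence of the $h$-isotropic (equivalently, concircularly flat) form of $R$ together with the symmetry of $g$. The only point demanding attention is the bookkeeping of the cyclic permutation, so that the six scalar coefficients pair up exactly and the symmetry of $g$ produces complete cancellation rather than a surviving term. Equivalently, one may first invoke Proposition~\ref{th.1} to replace the condition $C=0$ by $h$-isotropy and then carry out the identical computation.
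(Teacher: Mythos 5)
Your proof is correct, and it takes a genuinely different route from the paper's. You observe that concircular flatness gives $R=k_o G$ with $k_o=\frac{r}{n(n-1)}$ a scalar, so the claim reduces to the purely algebraic identity $\mathfrak{S}_{\overline{X},\overline{Y},\overline{Z}}\,G(\overline{X},\overline{Y})\overline{Z}=0$, which indeed follows from the symmetry of $g$ once the six terms are grouped by which of $\overline{X},\overline{Y},\overline{Z}$ they multiply; this is self-contained and is arguably the shortest possible argument. The paper instead contracts $R=k_oG$ with $\overline{\eta}$ to obtain the explicit form (\ref{eq.1}) of the $(v)h$-torsion $\widehat{R}$, shows that $\mathfrak{S}_{\overline{X},\overline{Y},\overline{Z}}\,T(\widehat{R}(\overline{X},\overline{Y}),\overline{Z})=0$ using the symmetry of the Cartan tensor $T$, and then invokes the first Bianchi identity for the Cartan connection, $\mathfrak{S}_{\overline{X},\overline{Y},\overline{Z}}\,R(\overline{X},\overline{Y})\overline{Z}=\mathfrak{S}_{\overline{X},\overline{Y},\overline{Z}}\,T(\widehat{R}(\overline{X},\overline{Y}),\overline{Z})$, quoted from \cite{r96}. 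What your approach buys is independence from that external identity and from any torsion considerations; what the paper's detour buys is structural economy for the rest of the section, since Equation (\ref{eq.1}) and the identity (\ref{eq.2}) established along the way are both reused later (in the proof of Theorem A and in Lemma \ref{lem.4}(a)). Both arguments are valid proofs of the lemma.
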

\begin{proof} Let $(M,L)$ be concircularly flat. Then, by Definition \ref{def.1} and the fact that $i_{\overline{\eta}}\,g=L\ell$, we have

\begin{equation}\label{eq.1}
    \widehat{ {R}}(\overline{X},\overline{Y})=kL(\ell(\overline{X})\overline{Y}-\ell(\overline{Y})\overline{X}),
 \end{equation}
where $k(x,y):=\frac{r}{n(n-1)}$, necessarily homogenous of degree $0$ in $y$.\\
From (\ref{eq.1}), taking into account the fact that the
$(h)hv$-torsion $T$ is symmetric, we obtain
\begin{eqnarray}
   \mathfrak{S}_{\overline{X},\overline{Y},\overline{Z}}T(\widehat{ {R}}(\overline{X},\overline{Y}),\overline{Z})
      &=&kL\set{
   \ell(\overline{X})T(\overline{Y},\overline{Z})-
    \ell(\overline{Y})T(\overline{X},\overline{Z})}\nonumber\\
    &&+kL\set{
   \ell(\overline{Y})T(\overline{Z},\overline{X})-
    \ell(\overline{Z})T(\overline{Y},\overline{X})}\nonumber\\
    &&+kL\set{
   \ell(\overline{Z})T(\overline{X},\overline{Y})-
    \ell(\overline{X})T(\overline{Z},\overline{Y})}\nonumber\\
    &=&0. \label{eq.3}
\end{eqnarray}
On the other hand, we have \cite{r96}
\begin{equation}\label{eq.2}
   \mathfrak{S}_{\overline{X},\overline{Y},\overline{Z}} {R}(\overline{X},\overline{Y})\overline{Z}=
    \mathfrak{S}_{\overline{X},\overline{Y},\overline{Z}}T(\widehat{ {R}}(\overline{X},\overline{Y}),\overline{Z}).
 \end{equation}
Hence, the result follows from (\ref{eq.2}) and (\ref{eq.3}).
\end{proof}

\begin{lem}\label{ricc.hv} For a $\pi$-tensor field
$\omega$ of type $(1,1)$ on a Finsler manifold $(M,L)$, we have
\begin{eqnarray}
  (\stackrel{v}{\nabla}\stackrel{h}{\nabla}\omega)(\overline{X},\overline{Y},\overline{Z})&-&
(\stackrel{h}{\nabla}\stackrel{v}{\nabla}\omega)(\overline{Y},\overline{X},\overline{Z})=
-( {P}(\overline{X},\overline{Y})\omega)(\overline{Z})\\
&&+(\stackrel{v}{\nabla}\omega)(\widehat{
{P}}(\overline{X},\overline{Y}),\overline{Z})+
(\stackrel{h}{\nabla}\omega)(T(\overline{Y},\overline{X}),\overline{Z}).
\end{eqnarray}
In particular,  for a scalar function $f(x,y)$, we have
$$\stackrel{v}{\nabla}\stackrel{h}{\nabla}f=\stackrel{h}{\nabla}\stackrel{v}{\nabla}f.$$
\end{lem}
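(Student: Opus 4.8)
\emph{Proof idea.} The stated identity is the Ricci-type commutation formula relating the mixed $hv$- and $vh$-second covariant derivatives of the Cartan connection $\nabla$, so the plan is to lift everything to the slit tangent bundle $\T M$ through the horizontal and vertical maps $\beta$ and $\gamma$ and to read off the classical curvature $\textbf{K}$ and torsion $\textbf{T}$ of the underlying linear connection $D=\nabla$. Recalling that $\stackrel{h}{\nabla}_{\overline{X}}=D_{\beta\overline{X}}$ and $\stackrel{v}{\nabla}_{\overline{X}}=D_{\gamma\overline{X}}$, I would first write out $(\stackrel{v}{\nabla}\stackrel{h}{\nabla}\omega)(\overline{X},\overline{Y},\overline{Z})$ and $(\stackrel{h}{\nabla}\stackrel{v}{\nabla}\omega)(\overline{Y},\overline{X},\overline{Z})$ as iterated $D$-derivatives of $\omega(\overline{Z})$, keeping carefully the tensorial correction terms that arise from differentiating the arguments $\overline{Y}$ and $\overline{Z}$.

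Subtracting the two expressions, the leading part collapses into the commutator $[D_{\gamma\overline{X}},D_{\beta\overline{Y}}]$ applied to $\omega$. Using $\textbf{K}(\gamma\overline{X},\beta\overline{Y})=[D_{\gamma\overline{X}},D_{\beta\overline{Y}}]-D_{[\gamma\overline{X},\beta\overline{Y}]}$ together with the defining relation $P(\overline{X},\overline{Y})=\textbf{K}(\beta\overline{X},\gamma\overline{Y})$, this yields the curvature contribution, which after reordering the two lifts is exactly the term $-(P(\overline{X},\overline{Y})\omega)(\overline{Z})$, where $P(\overline{X},\overline{Y})$ acts on the $(1,1)$-tensor $\omega$ as a derivation. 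What remains is the bracket term $D_{[\gamma\overline{X},\beta\overline{Y}]}\omega$ together with the leftover argument-corrections.

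The decisive step is to split the bracket into its horizontal and vertical parts, $[\gamma\overline{X},\beta\overline{Y}]=\beta\,\rho[\gamma\overline{X},\beta\overline{Y}]+\gamma\,K[\gamma\overline{X},\beta\overline{Y}]$, and to evaluate the two projections. Applying the definition of $\textbf{T}$ to the pair $(\gamma\overline{X},\beta\overline{Y})$ gives $\rho[\gamma\overline{X},\beta\overline{Y}]=\stackrel{v}{\nabla}_{\overline{X}}\overline{Y}-T(\overline{X},\overline{Y})$, while applying $\textbf{K}(\cdot,\cdot)\overline{\eta}$ and using the standard relations $\stackrel{h}{\nabla}\overline{\eta}=0$ and $\stackrel{v}{\nabla}_{\overline{X}}\overline{\eta}=\overline{X}$ expresses $K[\gamma\overline{X},\beta\overline{Y}]$ through $\stackrel{h}{\nabla}_{\overline{Y}}\overline{X}$ and $\widehat{P}$. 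Feeding these back in, the pure $\stackrel{v}{\nabla}_{\overline{X}}\overline{Y}$ and $\stackrel{h}{\nabla}_{\overline{Y}}\overline{X}$ pieces cancel against the argument-corrections, and what survives are precisely the two terms $(\stackrel{v}{\nabla}\omega)(\widehat{P}(\overline{X},\overline{Y}),\overline{Z})$ and $(\stackrel{h}{\nabla}\omega)(T(\overline{Y},\overline{X}),\overline{Z})$. I expect the main obstacle to be essentially bookkeeping: organising the signs and the numerous correction terms so that they recombine exactly into the right-hand side, and it is here that the defining properties of the Cartan connection (metricity, $Q=0$, and the symmetry of $T$) are invoked.

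Finally, for the scalar case I would specialise the identity to $\omega=f$, a $(0,0)$-tensor. The curvature operator annihilates functions, so the term $P(\overline{X},\overline{Y})f$ disappears, and the assertion $\stackrel{v}{\nabla}\stackrel{h}{\nabla}f=\stackrel{h}{\nabla}\stackrel{v}{\nabla}f$ reduces to showing that the two residual torsion corrections cancel. This last cancellation is the delicate point of the argument: I would establish it from the symmetry of the Cartan tensor $T$ together with the contraction relations satisfied by $\widehat{P}$, which is exactly where the special structure of the Cartan connection is genuinely required.
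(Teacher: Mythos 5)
The paper never actually proves this lemma --- it is stated bare, with the commutation machinery implicitly deferred to \cite{r96} --- so your proposal has to stand on its own. For the displayed identity your route is the right and standard one: writing the two mixed second derivatives as iterated $\nabla$-derivatives, the leading terms assemble into $\textbf{K}(\gamma\overline{X},\beta\overline{Y})+\nabla_{[\gamma\overline{X},\beta\overline{Y}]}$, and the projections $\rho[\gamma\overline{X},\beta\overline{Y}]=\nabla_{\gamma\overline{X}}\overline{Y}-T(\overline{X},\overline{Y})$ and $K[\gamma\overline{X},\beta\overline{Y}]=-\nabla_{\beta\overline{Y}}\overline{X}+\widehat{P}(\overline{Y},\overline{X})$ (obtained exactly as you say, from the definition of $\textbf{T}$ and from $\textbf{K}(\cdot,\cdot)\overline{\eta}$ together with $K\circ\beta=0$, $K\circ\gamma=\mathrm{id}$) do cancel the $\nabla_{\gamma\overline{X}}\overline{Y}$ and $\nabla_{\beta\overline{Y}}\overline{X}$ pieces against the argument corrections, leaving curvature, $\widehat{P}$- and $T$-terms of the stated shape, up to the sign and ordering conventions of \cite{r96}, which you should fix once and carry through consistently.

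The genuine gap is the scalar case, and you have correctly identified the delicate point without resolving it --- indeed it cannot be resolved by the cancellation you propose. Specializing your own identity to a $(0,0)$-tensor $f$ kills the curvature term and leaves $(\stackrel{v}{\nabla}f)(\widehat{P}(\overline{X},\overline{Y}))+(\stackrel{h}{\nabla}f)(T(\overline{Y},\overline{X}))$, which is $Wf$ for the vector field $W=\gamma\,\widehat{P}(\overline{X},\overline{Y})+\beta\,T(\overline{Y},\overline{X})$ on $\T M$. No symmetry of $T$ and no contraction identity for $\widehat{P}$ can make these two terms cancel for every $f$: one is a derivative in a vertical direction, the other in a horizontal direction, and since $\gamma$ and $\beta$ have complementary images, $Wf=0$ for all $f$ forces $\widehat{P}(\overline{X},\overline{Y})=0$ and $T(\overline{Y},\overline{X})=0$, which fails for any non-Riemannian Finsler metric. (This is just the global form of the classical Ricci identity for scalars under the Cartan connection, whose mixed $hv$-commutator picks up both a Cartan-tensor term and a $\widehat{P}$-term on the right-hand side.) So the commutation $\stackrel{v}{\nabla}\stackrel{h}{\nabla}f=\stackrel{h}{\nabla}\stackrel{v}{\nabla}f$ does not follow from the first part by pure algebra; to obtain it in the situation where the paper actually uses it you would need the extra hypotheses available there (e.g.\ $\stackrel{v}{\nabla}f=0$, which removes the $\widehat{P}$-term, plus control of $\stackrel{h}{\nabla}f$ against $T$), and your write-up must either supply those or restrict the statement rather than appeal to a cancellation that cannot occur.
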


\bigskip

\noindent\textit{\textbf{Proof of Theorem A}}: Let $(M,L)$ be  a concircularly flat Finsler manifold,
then the $(v)h$-torsion tensor $\widehat{ {R}}$ satisfies Equation
(\ref{eq.1}). As a consequence of Lemma \ref{ell}, (\ref{eq.1}) reduces to
\begin{equation}\label{eqn. 2.7}
H=kL^2\phi
\end{equation}

If $k$ is constant, then the result follows from (\ref{eqn. 2.7})
and Definition \ref{sca.}. Now, we will show that
$\stackrel{v}\nabla k=\stackrel{h}\nabla k=0$.
\par We have \cite{r96}
\begin{eqnarray*}
 &&(\nabla_{\gamma\overline{X}} {R})(\overline{Y},\overline{Z},\overline{W})
   + (\nabla_{\beta\overline{Y}} {P})(\overline{Z},\overline{X},\overline{W})-
   (\nabla_{\beta
   \overline{Z}} {P})(\overline{Y},\overline{X},\overline{W})\nonumber\\
&&-
 {P}(\overline{Z},\widehat{ {P}}(\overline{Y},\overline{X}))\overline{W}
+ {R}(T(\overline{X},\overline{Y}),\overline{Z})\overline{W}-
 {S}(\widehat{ {R}}(\overline{Y},\overline{Z}),\overline{X})\overline{W}\nonumber\\
&&+  {P}(\overline{Y}, \widehat{
{P}}(\overline{Z},\overline{X}))\overline{W} -
{R}(T(\overline{X},\overline{Z}),\overline{Y})\overline{W}=0.
\end{eqnarray*}
Setting $\overline{W}=\overline{\eta}$ into the above relation,
noting that $K \circ \gamma = id_{\cp}$,   $K\circ \beta=0$ and
$\widehat{S}=0$, it follows that
\begin{eqnarray*}
 &&(\nabla_{\gamma\overline{X}}\widehat{ {R}})(\overline{Y},\overline{Z})
 -{ {R}}(\overline{Y},\overline{Z})\overline{X}
   + (\nabla_{\beta\overline{Y}}\widehat{ {P}})(\overline{Z},\overline{X})-
   (\nabla_{\beta
   \overline{Z}}\widehat{ {P}})(\overline{Y},\overline{X})\nonumber\\
&&- \widehat{ {P}}(\overline{Z},\widehat{
{P}}(\overline{Y},\overline{X})) + \widehat{ {P}}(\overline{Y},
\widehat{ {P}}(\overline{Z},\overline{X}))+\widehat{
{R}}(T(\overline{X},\overline{Y}),\overline{Z}) -\widehat{
{R}}(T(\overline{X},\overline{Z}),\overline{Y})=0.
\end{eqnarray*}
Applying the  cyclic sum
$\mathfrak{S}_{\overline{X},\overline{Y},\overline{Z}}$ on the above
equation, taking  Lemma \ref{lem.2} into account, we get
\begin{equation}\label{eq.8}
    \mathfrak{S}_{\overline{X},\overline{Y},\overline{Z}}(
    \nabla_{\gamma\overline{X}}\widehat{ {R}})(\overline{Y},\overline{Z})=0.
\end{equation}
Substituting (\ref{eq.1}) into (\ref{eq.8}), using
$(\nabla_{\gamma\overline{X}}\ell)(\overline{Y})=L^{-1}\hbar(\overline{X},\overline{Y})$
(Lemma \ref{ell}(\textbf{b})), we have
\begin{eqnarray*}
   &&
    L(\nabla_{\gamma\overline{Z}}k)\set{(\ell(\overline{X})\overline{Y}-\ell(\overline{Y})\overline{X})} +
    L(\nabla_{\gamma\overline{Y}}k)\set{(\ell(\overline{Z})\overline{X}-\ell(\overline{X})\overline{Z})} \\
    &&+L(\nabla_{\gamma\overline{X}}k)\set{(\ell(\overline{Y})\overline{Z}-\ell(\overline{Z})\overline{Y})}
    +k\ell(\overline{Z})\set{(\ell(\overline{X})\overline{Y}-\ell(\overline{Y})\overline{X})}\\
   &&+k\ell(\overline{Y})\set{(\ell(\overline{Z})\overline{X}-\ell(\overline{X})\overline{Z})}
   +k\ell(\overline{X})\set{(\ell(\overline{Y})\overline{Z}-\ell(\overline{Z})\overline{Y})}\\
   &&+kL\set{(\hbar(\overline{X},\overline{Z})\overline{Y}-\hbar(\overline{Y},\overline{Z})\overline{X})}
   +kL\set{(\hbar(\overline{Z},\overline{Y})\overline{X}-\hbar(\overline{X},\overline{Y})\overline{Z})}\\
   &&+kL\set{(\hbar(\overline{Y},\overline{X})\overline{Z}-\hbar(\overline{Z},\overline{X})\overline{Y})}=0.
\end{eqnarray*}
Setting $\overline{Z}=\overline{\eta}$ into the above relation,
noting that $i_{\overline{\eta}}\ell=L$,
$i_{\overline{\eta}}\hbar=0$ (Lemma \ref{ell}(\textbf{c}))
 and $\nabla_{\gamma\overline{\eta}}k=0$, we
 conclude that
 \begin{equation}\label{eq.2a}
   L^{2}\set{\phi(\overline{Y})\,\nabla_{\gamma\overline{X}}k
   -\phi(\overline{X})\,\nabla_{\gamma\overline{Y}}k}=0.
 \end{equation}
Taking the trace of both sides of (\ref{eq.2a}) with respect to
$\overline{Y}$, noting that $Tr(\phi)=n-1$ \cite{r86}, it follows
that
\begin{equation*}
(n-2)\nabla_{\gamma \overline{X}}k=0.
\end{equation*}
Consequently, as $n\geq3$,
\begin{equation}\label{eq.9}
\stackrel{v}\nabla k=0.
\end{equation}
\par
Now, From  (\ref{eq.1}) and the fact that the
$(v)hv$-torsion $\widehat{ {P}}$ is symmetric \cite{r96}, we get
\begin{eqnarray}
  \mathfrak{S}_{\overline{X},\overline{Y},\overline{Z}}\widehat{ {P}}(\widehat{ {R}}(\overline{X},\overline{Y}),
\overline{Z}) &=&0. \label{eq.5}
\end{eqnarray}
On the other hand, we have \cite{r96}
\begin{equation*}
   \mathfrak{S}_{\overline{X},\overline{Y},\overline{Z}}\,
\{(\nabla_{\beta \overline{X}} {R})(\overline{Y},
\overline{Z},\overline{W})+ {P}(\widehat{
{R}}(\overline{X},\overline{Y}), \overline{Z})\overline{W}\}=0.
\end{equation*}
From which, together with (\ref{eq.5}), it follows that
\begin{equation}\label{eq.6}
   \mathfrak{S}_{\overline{X},\overline{Y},\overline{Z}}\,
(\nabla_{\beta \overline{X}}\widehat{ {R}})(\overline{Y},
\overline{Z})=0.
\end{equation}
Again from (\ref{eq.1}), noting that $\nabla_{\beta
\overline{X}}\ell=0$ (Lemma \ref{ell}(\textbf{b})), (\ref{eq.6})
reads
\begin{eqnarray*}
  && L(\nabla_{\beta
\overline{X}}k)\set{\ell(\overline{Y})\overline{Z}-\ell(\overline{Z})\overline{Y}}
+  L(\nabla_{\beta
\overline{Y}}k)\set{\ell(\overline{Z})\overline{X}-\ell(\overline{X})\overline{Z}} \\
   &&+L(\nabla_{\beta
\overline{Z}}k)\set{\ell(\overline{X})\overline{Y}-\ell(\overline{Y})\overline{X}}=0.
\end{eqnarray*}
Setting $\overline{Z}=\overline{\eta}$ into the above equation,
noting that $\ell(\overline{\eta})=L$  (Lemma
\ref{ell}(\textbf{c})), we obtain
\begin{eqnarray*}
  && L(\nabla_{\beta
\overline{X}}k)\set{\ell(\overline{Y})\overline{\eta}-L\overline{Y}}
+  L(\nabla_{\beta
\overline{Y}}k)\set{L\overline{X}-\ell(\overline{X})\overline{\eta}} \\
   &&+L(\nabla_{\beta
\overline{\eta}}k)\set{\ell(\overline{X})\overline{Y}-\ell(\overline{Y})\overline{X}}=0.
\end{eqnarray*}
Taking the trace of both sides with respect to $\overline{Y}$, it
follows that
\begin{equation}\label{eq.7}
\nabla_{\beta \overline{X}}k=L^{-1}(\nabla_{\beta
\overline{\eta}}k)\ell(\overline{X}).
\end{equation}
Applying the $v$-covariant derivative with respect to $\overline{Y}$
on both sides of (\ref{eq.7}), yields
\begin{equation*}
  \ell(\overline{Y})
  \nabla_{\beta\overline{X}}k+L(\stackrel{v}\nabla\stackrel{h}\nabla
  k)(\overline{X},\overline{Y})=
  L^{-1}\hbar(\overline{X},\overline{Y})(\nabla_{\beta\overline{\eta}}k)+\ell(\overline{X})
  (\stackrel{v}\nabla\stackrel{h}\nabla k)(\overline{\eta},\overline{Y}).
\end{equation*}
Since, $\stackrel{v}\nabla\stackrel{h}\nabla
  k=\stackrel{h}\nabla\stackrel{v}\nabla
  k=0$, by Lemma \ref{ricc.hv} and (\ref{eq.9}), the above relation reduces to
\begin{equation*}
  \ell(\overline{Y})
  \nabla_{\beta\overline{X}}k=
  L^{-1}\hbar(\overline{X},\overline{Y})(\nabla_{\beta\overline{\eta}}k).
\end{equation*}
Setting $\overline{Y}=\overline{\eta}$ into the above equation,
taking Lemma \ref{ell} into account, it follows that
$\nabla_{\beta\overline{X}}k=0$. Consequently,
\begin{equation}\label{eq.11}
\stackrel{h}\nabla k=0.
\end{equation}
\par Now, Equations  (\ref{eq.9}) and (\ref{eq.11}) imply  that
$\nabla k=0$. Hence, $k$ is constant and the theorem is proved.
${\qquad\qquad\quad\qquad\qquad\qquad\qquad\qquad\quad\qquad\qquad\qquad\qquad\qquad}\square$


\Section{Concircularly Recurrent Finsler Manifold}

We first introduce the following two  special Finsler spaces which
will be the object of our study in this section.
\begin{defn}\label{def.4} A Finsler manifold $(M,L)$ of dimension $n\geq3$  is called
 $ {R}^{h}$-recurrent if its $h$-curvature tensor
 $ {R}$ is horizontally recurrent:
  \begin{equation}\label{rec.}
  \stackrel{h}{\nabla} {R}= \lambda\otimes {R} \ \ \ \text{and} \ \   R\neq0,
  \end{equation}
where $\lambda$ is a scalar $\pi$-form,
 positively homogenous of degree zero in $y$, called the recurrence form.
\par In particular, if \,$\stackrel{h}{\nabla} {R}=0$, then $(M,L)$ is
called $ {R}^{h}$-symmetric.
\end{defn}

\begin{defn}\label{def.5} A Finsler manifold $(M,L)$ of dimension $n\geq3$  is called
 concircularly recurrent if its  concircular curvature tensor
 $ {C}$ is horizontally recurrent:
  \begin{equation}\label{co.rec.}
 \stackrel{h}{\nabla} {C}= \alpha\otimes {C} \ \ \  \text{and} \ \  C\neq0,
 \end{equation}
where $\alpha$ is a scalar $\pi$-form, positively homogenous
 of degree zero in $y$, called the recurrence form.
\par In particular, if \,$\stackrel{h}{\nabla} {C}=0$, then $(M,L)$ is
called concircularly symmetric.
\end{defn}

The following theorem is the second main  result of the present
paper.

\vspace{5pt}

\noindent\textbf{Theorem B.}\emph{ Every $R^h$-recurrent
Finsler manifold is concircularly recurrent with the same recurrence
form.}

\begin{proof} Let $(M,L)$ be an $R^h$-recurrent Finsler
manifold with recurrence form $\lambda$. Then (\ref{rec.}) is
satisfied. Consequently,
\begin{equation}\label{eq.12}
   \stackrel{h}{\nabla} {r}= \lambda\otimes {r}.
\end{equation}
Now, from Definition \ref{def.5},  (\ref{rec.}) and (\ref{eq.12}),
we get
\begin{eqnarray*}
 \stackrel{h}{\nabla} {C} &=& \stackrel{h}{\nabla}\set{ {R}-\frac{r}{n(n-1)}\, {G}}\\
   &=& \stackrel{h}{\nabla} {R}-\frac{1}{n(n-1)}\stackrel{h}{\nabla}r\otimes {G} \\
   &=& \lambda\otimes\set{ {R}-\frac{r}{n(n-1)}\, {G}}\\
   &=&\lambda\otimes {C}.
\end{eqnarray*}
Therefore, $(M,L)$ is  concircularly recurrent with the same
recurrence form $\lambda$.
\end{proof}

\begin{rem}\emph{The converse of the above theorem is not true in general.
However, it has been recently proved to be true in Riemannian geometry \cite{Olszak}} .
\end{rem}

\par For the converse of Theorem B to be true in the Finslerian context, an additional
condition is needed, namely the horizontal integrability condition.
A Finsler manifold is said to be  horizontally integrable if its
horizonal distribution is completely integrable (or, equivalently, $\widehat{R}=0)$.
\vspace{8pt}

Now, we are in a position to announce our third main and most important result.

\vspace{5pt}

\noindent\textbf{Theorem C.}\label{th.5}\emph{ Every horizontally
integrable concircularly recurrent Finsler manifold is  $R^h$-recurrent
with the same recurrence form.}

\bigskip

To prove this theorem we need the following three lemmas.

\begin{lem}\label{lem.3}
For a concircularly recurrent Finsler manifold with recurrence form
$\alpha$, we have
$$ \stackrel{h}{\nabla} {R}= \alpha\otimes {R}+\mu\otimes {G},$$
where $\mu$ is a $\pi$-scalar form defined by
$$\mu:=\frac{1}{n(n-1)}\set{\stackrel{h}\nabla r-r\alpha}. $$
\end{lem}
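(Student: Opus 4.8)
The plan is to differentiate the defining relation $C := R - \frac{r}{n(n-1)}\,G$ with the $h$-covariant derivative $\stackrel{h}{\nabla}$ and then to insert the concircular recurrence hypothesis $\stackrel{h}{\nabla} C = \alpha\otimes C$ guaranteed by Definition \ref{def.5}. Equating the two resulting expressions for $\stackrel{h}{\nabla} C$ and solving algebraically for $\stackrel{h}{\nabla} R$ should produce the stated identity, with $\mu$ emerging as the coefficient of $G$.

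The one structural fact I would isolate first is that $G$ is $h$-parallel, $\stackrel{h}{\nabla} G = 0$. This is exactly where the metric character of the Cartan connection is used: since $G$ is built algebraically from the metric $g$ through (\ref{G}) and the identity $\pi$-tensor, and since $\nabla g = 0$, the Leibniz rule forces $\stackrel{h}{\nabla} G = 0$. Consequently $\stackrel{h}{\nabla}\!\left(\frac{r}{n(n-1)}\,G\right) = \frac{1}{n(n-1)}(\stackrel{h}{\nabla} r)\otimes G$, the derivative of the scalar $r$ being the only surviving term.

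Applying $\stackrel{h}{\nabla}$ to the definition of $C$ then gives $\stackrel{h}{\nabla} C = \stackrel{h}{\nabla} R - \frac{1}{n(n-1)}(\stackrel{h}{\nabla} r)\otimes G$, while substituting $C = R - \frac{r}{n(n-1)}\,G$ into the recurrence hypothesis gives $\stackrel{h}{\nabla} C = \alpha\otimes R - \frac{r}{n(n-1)}\,\alpha\otimes G$. Setting these equal and solving for $\stackrel{h}{\nabla} R$ yields $\stackrel{h}{\nabla} R = \alpha\otimes R + \frac{1}{n(n-1)}(\stackrel{h}{\nabla} r - r\alpha)\otimes G$, which is precisely the claim with $\mu = \frac{1}{n(n-1)}\{\stackrel{h}{\nabla} r - r\alpha\}$. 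There is no genuine obstacle here: the whole argument is a single Leibniz computation followed by a rearrangement, and the only point needing a word of justification is the parallelism $\stackrel{h}{\nabla} G = 0$, which is immediate from $\nabla g = 0$.
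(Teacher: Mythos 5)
Your proof is correct and follows exactly the paper's own route: apply $\stackrel{h}{\nabla}$ to the definition of $C$, use $\stackrel{h}{\nabla}G=0$, insert the recurrence condition, and solve for $\stackrel{h}{\nabla}R$. Your added justification of $\stackrel{h}{\nabla}G=0$ via $\nabla g=0$ is a welcome detail that the paper merely asserts.
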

\begin{proof} Let $(M,L)$ be a concircularly recurrent Finsler manifold with recurrence form
$\alpha$. Then,  by Definitions \ref{def.5} and \ref{def.1}, we have
$$ \stackrel{h}{\nabla} \set{R-\frac{r}{n(n-1)}\,G}= \alpha\otimes
 \set{R-\frac{r}{n(n-1)}\,G}.$$
From which, together with the the fact that $\stackrel{h}\nabla
G=0$, we get
\begin{eqnarray*}
 \stackrel{h}{\nabla}R-\frac{\stackrel{h}{\nabla}r}{n(n-1)}\otimes\,G&=& \alpha\otimes
 \set{R-\frac{r}{n(n-1)}\,G}.
\end{eqnarray*}
Hence, the result follows.
\end{proof}

\begin{lem}\label{lem.4} For a horizontally integrable Finsler manifold, we have
\begin{description}
  \item[(a)]$\mathfrak{S}_{\overline{X},\overline{Y},\overline{Z}} {R}(\overline{X},\overline{Y})\overline{Z}=0.$
  \item[(b)]$\textbf{R}(\overline{X},\overline{Y},\overline{Z},\overline{W})=\textbf{R}(\overline{Z},\overline{W},\overline{X},\overline{Y}).$
  \item[(c)] The horizontal Ricci tensors ${R}ic$ is symmetric.
  \item[(d)] $\mathfrak{S}_{\overline{U},\overline{V};\,\,\overline{W},\overline{X};\,\,\overline{Y},\overline{Z}}\set{({R}
(\overline{U},\overline{V})\textbf{R})(\overline{W},\overline{X},\overline{Y},\overline{Z})}=0$
\footnote{$\mathfrak{S}_{\overline{U},\overline{V};\,\,\overline{W},\overline{X};\,\, \overline{Y},\overline{Z}}$
denotes the cyclic sum over the three pairs of arguments
$\overline{U},\overline{V};\,\, \overline{W},\overline{X}$ and
$\overline{Y},\overline{Z}$.}
  \item[(e)]$ (\stackrel{h}{\nabla}\stackrel{h}{\nabla}\omega)(\overline{Y},\overline{X},\overline{Z})-
(\stackrel{h}{\nabla}\stackrel{h}{\nabla}\omega)(\overline{X},\overline{Y},\overline{Z})=
( {R}(\overline{X},\overline{Y})\omega)(\overline{Z}),$\\
where $\omega$ is a $\pi$-tensor field  of type $(1,1)$.
\end{description}
\end{lem}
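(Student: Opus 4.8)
The whole proof turns on the identity $\widehat{R}=0$ that defines horizontal integrability, which removes the $(v)h$-torsion corrections from the Bianchi and commutation formulae of the Cartan connection. I would settle (a) and (e) first, as the rest depend on them. For (a), the identity (\ref{eq.2}) already equates $\mathfrak{S}_{\overline{X},\overline{Y},\overline{Z}}R(\overline{X},\overline{Y})\overline{Z}$ with $\mathfrak{S}_{\overline{X},\overline{Y},\overline{Z}}T(\widehat{R}(\overline{X},\overline{Y}),\overline{Z})$, and $\widehat{R}=0$ kills the right-hand side, leaving the first Bianchi identity. For (e), I would take the general commutation formula of \cite{r96} for $[\stackrel{h}{\nabla},\stackrel{h}{\nabla}]$ acting on a $(1,1)$ $\pi$-tensor, whose correction terms are built from the $(h)h$-torsion and from $\widehat{R}$; since the Cartan connection has vanishing $(h)h$-torsion and $\widehat{R}=0$ here, every correction drops and the commutator reduces to the algebraic action $(R(\overline{X},\overline{Y})\omega)(\overline{Z})$.

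For (b), I would first deduce skew-symmetry in the last pair of arguments from metric compatibility $\stackrel{h}{\nabla}g=0$ of the Cartan connection, which yields $g(R(\overline{X},\overline{Y})\overline{Z},\overline{W})+g(\overline{Z},R(\overline{X},\overline{Y})\overline{W})=0$, i.e. $\textbf{R}(\overline{X},\overline{Y},\overline{Z},\overline{W})=-\textbf{R}(\overline{X},\overline{Y},\overline{W},\overline{Z})$. Combined with the skew-symmetry in the first pair (immediate from $R(\overline{X},\overline{Y})=-R(\overline{Y},\overline{X})$) and the first Bianchi identity (a), the pair-interchange symmetry follows by the classical algebraic device of writing (a) for the four cyclically permuted orderings of the arguments and adding. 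Part (c) is then a one-line corollary: tracing the interchange identity (b) over one slot of each pair gives $Ric(\overline{X},\overline{Z})=Ric(\overline{Z},\overline{X})$.

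The real difficulty is (d), a purely algebraic identity quadratic in the curvature. My plan is to start from the differential second Bianchi identity of \cite{r96}, which in the horizontally integrable case loses its $P(\widehat{R}(\cdot,\cdot),\cdot)$ term and reduces to $\mathfrak{S}_{\overline{U},\overline{V},\overline{W}}(\stackrel{h}{\nabla}_{\overline{U}}\textbf{R})(\overline{V},\overline{W},\overline{Y},\overline{Z})=0$. I would then apply a further $h$-covariant derivative and antisymmetrize in the two differentiation directions, so that the Ricci identity (e) replaces each antisymmetrized second derivative of $\textbf{R}$ by an algebraic curvature action $R\cdot\textbf{R}$; arranging the result as a cyclic sum over the three pairs $\overline{U},\overline{V}$, $\overline{W},\overline{X}$ and $\overline{Y},\overline{Z}$, the genuine derivative terms cancel against one another by the differentiated Bianchi relation, leaving exactly $\mathfrak{S}\set{(R(\overline{U},\overline{V})\textbf{R})(\overline{W},\overline{X},\overline{Y},\overline{Z})}=0$. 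The main obstacle is precisely this last cancellation: one must track the cyclic sums over the three pairs carefully and verify that every surviving derivative term is annihilated, so that only the curvature-on-curvature contributions remain. This combinatorial bookkeeping is where essentially all the effort of the lemma lies.
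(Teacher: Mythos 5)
Parts (a), (c) and (e) of your plan coincide with the paper's proof: (a) is exactly (\ref{eq.2}) together with $\widehat{R}=0$, and (e) is the general $h$-$h$ commutation formula, whose only correction term for the Cartan connection is $(\stackrel{v}{\nabla}\omega)(\widehat{R}(\overline{X},\overline{Y}),\overline{Z})$ and hence dies under horizontal integrability. Your (b) is also essentially the paper's: the paper cites the two skew-symmetries (\ref{low.1}) and (\ref{low.2}) from \cite{r96} and combines them with (a) by the classical four-fold summation, while you additionally rederive (\ref{low.2}) from $\nabla g=0$; that is fine, since for the Cartan connection both $\stackrel{h}{\nabla}g$ and $\stackrel{v}{\nabla}g$ vanish, so no torsion correction survives in the Ricci identity for $g$.

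The genuine gap is in (d). You have deferred its entire content to an unverified claim that, after differentiating the second Bianchi identity and antisymmetrizing, ``the derivative terms cancel.'' That step is not only unproven but structurally doubtful: the three summands of $\mathfrak{S}_{\overline{U},\overline{V},\overline{W}}(\nabla_{\beta\overline{U}}\textbf{R})(\overline{V},\overline{W},\cdot,\cdot)=0$ carry three \emph{different} differentiation slots, so no single antisymmetrization against an added derivative turns all of them into curvature actions at once, and the leftover first-derivative terms are not organized by any Bianchi relation that would force them to cancel. More importantly, the detour is unnecessary: (d) is a purely algebraic identity, valid for any tensor with the symmetries (\ref{low.1}), (\ref{low.2}) and (b), and this is how the paper proves it. Expanding $({R}(\overline{U},\overline{V})\textbf{R})(\overline{W},\overline{X},\overline{Y},\overline{Z})$ as minus the sum of $\textbf{R}$ with $R(\overline{U},\overline{V})$ inserted into each of the four arguments, then using pair interchange (b) on the first two pieces and the $g$-skewness of $R(\overline{U},\overline{V})$ (from (\ref{low.2})) on the second and fourth, the four pieces regroup as
$g([R(\overline{U},\overline{V}),R(\overline{Y},\overline{Z})]\overline{W},\overline{X})+g([R(\overline{U},\overline{V}),R(\overline{W},\overline{X})]\overline{Y},\overline{Z})$.
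The cyclic sum over the three pairs then vanishes identically, because each commutator occurs exactly twice with its entries in opposite order. Replace your Bianchi-based plan for (d) by this direct cancellation; as written, the crux of (d) is missing.
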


\begin{proof} ~\par

\vspace{4pt}
 \noindent\textbf{(a)} Follows from (\ref{eq.2}) and the horizontal integrability condition ($\widehat{{R}}=0$).

\vspace{4pt}
 \noindent\textbf{(b)} Follows from \textbf{(a)} and the two identities \cite{r96}:
\begin{equation}\label{low.1}
      \textbf{R}(\overline{X},\overline{Y},\overline{Z}, \overline{W})=
 -\textbf{R}(\overline{Y},\overline{X},\overline{Z},\overline{W}),
\end{equation}
\vspace{-20pt}
\begin{equation}\label{low.2}
     \textbf{R}(\overline{X},\overline{Y},\overline{Z}, \overline{W})=
 -\textbf{R}(\overline{X},\overline{Y}, \overline{W},\overline{Z}).
\end{equation}

\vspace{4pt}
 \noindent\textbf{(c)} Follows from  \textbf{(b)}.

\vspace{4pt}
 \noindent\textbf{(d)} We have:
\begin{eqnarray*}
  ( {R}(\overline{U},\overline{V})\textbf{R})(\overline{W},\overline{X},\overline{Y},\overline{Z})
    &=& -\textbf{R}( {R}(\overline{U},\overline{V})\overline{W},\overline{X},\overline{Y},\overline{Z})-\textbf{R}(\overline{W}, {R}(\overline{U},\overline{V})\overline{X},\overline{Y},\overline{Z})\\
    && -\textbf{R}(\overline{W},\overline{X}, {R}(\overline{U},\overline{V})\overline{Y},\overline{Z})-\textbf{R}(\overline{W},\overline{X},\overline{Y}, {R}(\overline{U},\overline{V})\overline{Z}) \\
(
{R}(\overline{W},\overline{X})\textbf{R})(\overline{Y},\overline{Z},\overline{U},\overline{V})
&=&-\textbf{R}( {R}(\overline{W},\overline{X})\overline{Y},\overline{Z},\overline{U},\overline{V})-\textbf{R}(\overline{Y}, {R}(\overline{W},\overline{X})\overline{Z},\overline{U},\overline{V})\\
&&-\textbf{R}(\overline{Y},\overline{Z}, {R}(\overline{W},\overline{X})\overline{U},\overline{V})-\textbf{R}(\overline{Y},\overline{Z},\overline{U}, {R}(\overline{W},\overline{X})\overline{V})\\
(
{R}(\overline{Y},\overline{Z})\textbf{R})(\overline{U},\overline{V},\overline{W},\overline{X})
&=&-\textbf{R}(
{R}(\overline{Y},\overline{Z})\overline{U},\overline{V},\overline{W},\overline{X})-\textbf{R}(\overline{U}, {R}(\overline{Y},\overline{Z})\overline{V},\overline{W},\overline{X})\\
&&-\textbf{R}(\overline{U},\overline{V},
{R}(\overline{Y},\overline{Z})\overline{W},\overline{X})-\textbf{R}(\overline{U},\overline{V},\overline{W},
{R}(\overline{Y},\overline{Z})\overline{X}).
\end{eqnarray*}
Adding the above three equations, making use of (\ref{low.1}),
(\ref{low.2}) and \textbf{(b)}, the result follows.

 \vspace{4pt}
 \noindent\textbf{(e)}
One can show that for the Cartan connection, we have:
\begin{eqnarray*}
(\stackrel{h}{\nabla}\stackrel{h}{\nabla}\omega)(\overline{X},\overline{Y},\overline{Z})-
(\stackrel{h}{\nabla}\stackrel{h}{\nabla}\omega)(\overline{Y},\overline{X},\overline{Z})&=&
\omega( {R}(\overline{X},\overline{Y})\overline{Z})- {R}(\overline{X},\overline{Y})\omega(\overline{Z})\\
&&+(\stackrel{v}{\nabla}\omega)(\widehat{
{R}}(\overline{X},\overline{Y}),\overline{Z}).
\end{eqnarray*}
From which, together with the assumption  of horizontal integrability, the
result follows.
\end{proof}

\vspace{5pt}

Lemma \ref{lem.4}(\textbf{d}) and the next lemma are the global Finslerian versions of Walker's
lemmas \cite{Walker}, proved locally in Riemannian geometry.

\begin{lem}\label{lem.5} Let $A$ be a symmetric scalar $\pi$-form
and $B$ a scalar $\pi$-form. If for all $\overline{X},\overline{Y},
\overline{Z}\in \cp,$
\begin{equation}\label{eq.15}
  \mathfrak{S}_{\overline{X},\overline{Y}, \overline{Z}}\set{A(\overline{X},\overline{Y})B( \overline{Z})}=0,
\end{equation}
 then $A=0$ or $B=0$.
\par In particular, for a horizontally integrable non-flat
(non-concircularly flat) Finsler manifold, if one of the following
relations holds
\begin{eqnarray*}
  \mathfrak{S}_{\overline{U},\overline{V};\,\overline{W},\overline{X};\,\overline{Y},\overline{Z}}\set{\omega(\overline{U},\overline{V})\textbf{R}(\overline{W},\overline{X},\overline{Y},\overline{Z})
  }&=&0,\\
\mathfrak{S}_{\overline{U},\overline{V};\,\overline{W},\overline{X};\,\overline{Y},\overline{Z}}\set{\omega(\overline{U},\overline{V})\textbf{C}(\overline{W},\overline{X},\overline{Y},\overline{Z})
  }&=&0,
\end{eqnarray*}
then the scalar $\pi$-form $\omega$ vanishes, where
$$\textbf{C}(\overline{X},\overline{Y},\overline{Z},\overline{W}):=g(C(\overline{X},\overline{Y})\overline{Z},\overline{W}).$$
\end{lem}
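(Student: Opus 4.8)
The plan is to establish the abstract algebraic statement first and then to read off both ``in particular'' assertions by viewing the three-pair cyclic sum as a single instance of that statement, with ordered pairs of $\pi$-vector fields playing the role of the individual arguments. For the abstract claim I would argue contrapositively: assuming $B\neq0$, I show $A=0$. Fix a $\pi$-vector field $\overline{Z}_0$ with $c:=B(\overline{Z}_0)\neq0$ and put $\omega(\overline{X}):=A(\overline{X},\overline{Z}_0)$, a scalar $\pi$-form. Setting $\overline{Z}=\overline{Z}_0$ in (\ref{eq.15}) and using the symmetry of $A$ gives
\[
c\,A(\overline{X},\overline{Y})+B(\overline{X})\omega(\overline{Y})+B(\overline{Y})\omega(\overline{X})=0,
\]
so that $A$ is completely determined by $B$ and $\omega$, namely $A(\overline{X},\overline{Y})=-\tfrac{1}{c}\bigl(B(\overline{X})\omega(\overline{Y})+B(\overline{Y})\omega(\overline{X})\bigr)$.

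Next I would feed this expression for $A$ back into (\ref{eq.15}). The two symmetric summands coincide because scalar factors commute, so the cyclic sum collapses to
\[
\omega(\overline{X})B(\overline{Y})B(\overline{Z})+\omega(\overline{Y})B(\overline{Z})B(\overline{X})+\omega(\overline{Z})B(\overline{X})B(\overline{Y})=0 .
\]
Specializing $\overline{X}=\overline{Y}=\overline{Z}=\overline{Z}_0$ yields $3c^{2}\omega(\overline{Z}_0)=0$, hence $\omega(\overline{Z}_0)=0$; then taking $\overline{Y}=\overline{Z}=\overline{Z}_0$ gives $c^{2}\omega(\overline{X})=0$ for every $\overline{X}$, so $\omega=0$ and therefore $A=0$. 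This settles the dichotomy $A=0$ or $B=0$.

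For the two applications I would reinterpret each displayed cyclic sum as an instance of (\ref{eq.15}) in which an ordered pair is a single argument: regard $\textbf{R}$ as a scalar form $\mathcal A$ of the two pair-arguments $(\overline{W},\overline{X})$ and $(\overline{Y},\overline{Z})$, and $\omega$ as a scalar form of the single pair $(\overline{U},\overline{V})$. The horizontal integrability hypothesis, through Lemma~\ref{lem.4}\textbf{(b)}, says exactly that $\mathcal A$ is \emph{symmetric} under interchange of its two pair-arguments, which is the analogue of the symmetry of $A$ used above; the abstract argument then runs verbatim at the level of pairs and forces $\textbf{R}=0$ or $\omega=0$. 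Since the manifold is non-flat ($\textbf{R}\neq0$), we conclude $\omega=0$. The concircular version is identical: the fully covariant tensor $\textbf{C}=\textbf{R}-\frac{r}{n(n-1)}\textbf{G}$, where $\textbf{G}(\overline{X},\overline{Y},\overline{Z},\overline{W}):=g(G(\overline{X},\overline{Y})\overline{Z},\overline{W})$, inherits pair-symmetry because $\textbf{G}$ is manifestly symmetric under the pair-swap and $\textbf{R}$ is so by Lemma~\ref{lem.4}\textbf{(b)}; non-concircular-flatness ($\textbf{C}\neq0$) then forces $\omega=0$.

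The only genuine input beyond elementary substitution is the pair-interchange symmetry of $\textbf{R}$ supplied by Lemma~\ref{lem.4}\textbf{(b)} (equivalently, by horizontal integrability): this is what makes the reduction to the abstract lemma legitimate, and I expect it to be the real load-bearing step. Two minor points to keep in mind: the numerical factor $3$ arising when all arguments are set equal is harmless over $\mathbb{R}$, and the nondegeneracy of $g$ lets one pass freely between $R\neq0$ and $\textbf{R}\neq0$, and between $C\neq0$ and $\textbf{C}\neq0$, so that the non-flat and non-concircularly-flat hypotheses translate into the $\textbf{R}\neq0$ and $\textbf{C}\neq0$ needed to rule out the trivial alternative.
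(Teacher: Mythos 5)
Your proposal is correct, and for the main algebraic dichotomy it takes a genuinely different route from the paper. The paper sets $\overline{X}=\overline{Y}=\overline{Z}$ in (\ref{eq.15}) to get $3A(\overline{X},\overline{X})B(\overline{X})=0$, concludes $A(\overline{X},\overline{X})=0$ for all $\overline{X}$ (implicitly using that $B$ can only vanish on a ``thin'' set when $B\neq0$), and then polarizes via $A(\overline{X}+\overline{Y},\overline{X}+\overline{Y})=0$. You instead anchor at a single $\overline{Z}_0$ with $B(\overline{Z}_0)\neq0$, solve (\ref{eq.15}) explicitly for $A$ in terms of $B$ and $\omega:=A(\cdot,\overline{Z}_0)$, and substitute back to kill $\omega$. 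This buys two things: it sidesteps the paper's silent division by $B(\overline{X})$ at arguments where $B$ might vanish, and it avoids polarization altogether --- which is a real advantage in the ``in particular'' part, where the arguments are ordered pairs and ``adding pairs'' only makes sense after passing to the tensor square (your argument needs only separate linearity in each slot, so it transfers verbatim). The application itself you handle exactly as the paper does: Lemma \ref{lem.4}(\textbf{b}) supplies the pair-interchange symmetry of $\textbf{R}$ (and hence of $\textbf{C}$, since $\textbf{G}$ is manifestly pair-symmetric), and $\textbf{R}\neq0$ (resp.\ $\textbf{C}\neq0$) rules out the first alternative, forcing $\omega=0$. One caveat you share with the paper: the dichotomy is really pointwise on $TM$ (which of $A$, $B$ vanishes could a priori vary from point to point, and your scalar $c=B(\overline{Z}_0)$ is a function that need not be nonvanishing everywhere), but this looseness is inherited from the paper's own treatment and does not affect the intended use.
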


\begin{proof} Let $A$ be a symmetric scalar $\pi$-form
and $B$ a scalar $\pi$-form which satisfy Relation (\ref{eq.15}).
If $B$ vanishes, the result follows. If $B$ dose not vanish,
then from (\ref{eq.15}), we have
\begin{eqnarray*}
  3A(\overline{X},\overline{X})B(X) &=& 0  \quad \forall \, \overline{X}\in \cp \\
  \Longrightarrow\qquad\qquad\,\, A(\overline{X},\overline{X})&=& 0  \quad \forall \, \overline{X}\in \cp \\
 \Longrightarrow\,\, A(\overline{X}+\overline{Y},\overline{X}+\overline{Y})&=& 0  \quad \forall \, \overline{X}, \overline{Y}\in \cp \\
\Longrightarrow \qquad\qquad 2A(\overline{X},\overline{Y})&=& 0  \quad
\forall \, \overline{X}, \overline{Y}\in \cp.
\end{eqnarray*} Hence, the scalr $\pi$-form $A$ vanishes.
\par
The second part of this lemma follows from the first part, taking
into account the assumption that
$\textbf{R}\neq0\,$($\textbf{C}\neq0$), together with
Lemma \ref{lem.4}(\textbf{b}).
\end{proof}

\bigskip

\noindent\textit{\textbf{Proof of Theorem C}}\,: Let $(M,L)$ be a
horizontally integrable concircularly recurrent Finsler manifold
with recurrence form $\alpha$.  The proof is achieved in three
steps:

\vspace{3pt}

\noindent \emph{\underline{First step}}: \emph{The
$h$-covariant derivative of the recurrence form $\alpha$ is
symmetric}:

\vspace{5pt}

The concircular recurrence condition  (\ref{co.rec.}) gives
\begin{eqnarray*}
  \stackrel{h}{\nabla}\textbf{C}&=&\alpha\otimes\textbf{ C} \\
  \stackrel{h}{\nabla}\stackrel{h}{\nabla}\textbf{C}&=& (\stackrel{h}{\nabla}\alpha)\otimes \textbf{C}
  +\alpha\otimes \stackrel{h}{\nabla}\textbf{C}\\
   &=&(\stackrel{h}{\nabla}\alpha+\alpha\otimes\alpha)\otimes \textbf{C}.
\end{eqnarray*}
From which, taking into account Lemma \ref{lem.4}, we obtain
\begin{eqnarray}
  ( {R}(\overline{U},\overline{V})\textbf{C})(\overline{W},\overline{X},\overline{Y},\overline{Z})&=&(\stackrel{h}{\nabla}\stackrel{h}{\nabla}\textbf{C})(\overline{V},\overline{U},\overline{W},\overline{X},\overline{Y},\overline{Z})
  -(\stackrel{h}{\nabla}\stackrel{h}{\nabla}\textbf{C})(\overline{U},\overline{V},\overline{W},\overline{X},\overline{Y},\overline{Z}) \nonumber\\
   &=& \set{(\stackrel{h}{\nabla}\alpha)(\overline{V},\overline{U})-(\stackrel{h}{\nabla}\alpha)(\overline{U},\overline{V})}\textbf{C}(\overline{W},\overline{X},\overline{Y},\overline{Z})\nonumber \\
   &=&-(\bar{d}\alpha)(\overline{U},\overline{V})\textbf{C}(\overline{W},\overline{X},\overline{Y},\overline{Z}),\label{eq.16}
\end{eqnarray}
where
 \begin{equation}\label{d.par}
(\bar{d}\alpha)(\overline{U},\overline{V}):=(\stackrel{h}{\nabla}\alpha)(\overline{U},\overline{V})
-(\stackrel{h}{\nabla}\alpha)(\overline{V},\overline{U}).
 \end{equation}

\par
On the other hand, in view of Definition \ref{def.1}, we have
  \begin{equation}\label{eq.C}
 \textbf{C}:={\textbf{R}}-\frac{r}{n(n-1)}\,{\textbf{G}},
   \end{equation}
   where $\textbf{G}$ is the $\pi$-tensor field defined by $$\textbf{G}(\overline{X},\overline{Y},\overline{Z},\overline{W})
   :=g(G(\overline{X},\overline{Y})\overline{Z}, \overline{W}).$$
 Using (\ref{eq.C}) and the identities $R(\overline{U},\overline{V})r=0=R(\overline{U},\overline{V})\textbf{G}$, we get
\begin{equation}\label{eq.17}
   ( {R}(\overline{U},\overline{V})\textbf{C})(\overline{W},\overline{X},\overline{Y},\overline{Z})=( {R}(\overline{U},\overline{V})\textbf{R})(\overline{W},\overline{X},\overline{Y},\overline{Z}.)
\end{equation}
Now, from (\ref{eq.16}) and (\ref{eq.17}), taking Lemma
\ref{lem.4}(\textbf{d}) into account, it follows that
\begin{equation*}
   \bar{d}\alpha(\overline{U},\overline{V})\textbf{C}(\overline{W},\overline{X},\overline{Y},\overline{Z})
   +\bar{d}\alpha(\overline{W},\overline{X})\textbf{C}(\overline{Y},\overline{Z},\overline{U},\overline{V})
   +\bar{d}\alpha(\overline{Y},\overline{Z})\textbf{C}(\overline{U},\overline{V},\overline{W},\overline{X})=0
\end{equation*}
From which, together with Lemma \ref{lem.5}, we conclude that
$\bar{d}\alpha=0$. Hence the result follows from (\ref{d.par}).

\bigskip

\noindent\emph{\underline{Second step}}:
 \emph{$(M,L)$ has the property that $R(\overline{X},\overline{Y})\textbf{R}=0$}:

\vspace{5pt}

We have, by Lemma \ref{lem.3},
\begin{eqnarray*}
  \stackrel{h}{\nabla}\textbf{R}&=& \alpha\otimes \textbf{R}+\mu\otimes \textbf{G} \\
  \stackrel{h}{\nabla}\stackrel{h}{\nabla}\textbf{R}&=& (\stackrel{h}{\nabla}\alpha)\otimes \textbf{R}
  +\alpha\otimes \stackrel{h}{\nabla}\textbf{R}+(\stackrel{h}{\nabla}\mu)\otimes \textbf{G} \\
   &=& (\stackrel{h}{\nabla}\alpha)\otimes \textbf{R}
  +\alpha\otimes (\alpha\otimes \textbf{R}+\mu\otimes \textbf{G})+(\stackrel{h}{\nabla}\mu)\otimes
  \textbf{G}\\
&=& (\stackrel{h}{\nabla}\alpha
  +\alpha\otimes \alpha)\otimes \textbf{R}+(\stackrel{h}{\nabla}\mu+\alpha\otimes\mu)\otimes
  \textbf{G}.
\end{eqnarray*}
The above equation together with Lemma \ref{lem.4}\textbf{(e)} and
(\ref{d.par}) imply that
\begin{eqnarray*}
  ( {R}(\overline{U},\overline{V})\textbf{R})(\overline{W},\overline{X},\overline{Y},\overline{Z})&=&
  -(\bar{d}\alpha)(\overline{U},\overline{V})\textbf{R}(\overline{W},\overline{X},\overline{Y},\overline{Z})\nonumber\\
  && -(\bar{d}\mu+\alpha\wedge\mu)(\overline{U},\overline{V})\textbf{G}(\overline{W},\overline{X},\overline{Y},
  \overline{Z}).
 \end{eqnarray*}
Now, taking into account the fact that $\bar{d}\alpha=0$
(\emph{First step}), the above equation reduces to
\begin{eqnarray}
  ( {R}(\overline{U},\overline{V})\textbf{R})(\overline{W},\overline{X},\overline{Y},\overline{Z})
 &=&-(\bar{d}\mu+\alpha\wedge\mu)(\overline{U},\overline{V})\textbf{G}(\overline{W},\overline{X},\overline{Y},\overline{Z}).\label{eq.18}
\end{eqnarray}
From which, taking Lemma \ref{lem.4} into account, we obtain
\begin{eqnarray*}
   &&(\bar{d}\mu+\alpha\wedge\mu)(\overline{U},\overline{V})\textbf{G}(\overline{W},\overline{X},\overline{Y},\overline{Z})
   +(\bar{d}\mu+\alpha\wedge\mu)(\overline{W},\overline{X})\textbf{G}(\overline{Y},\overline{Z},\overline{U},\overline{V}) \\
&&
+(\bar{d}\mu+\alpha\wedge\mu)(\overline{Y},\overline{Z})\textbf{G}(\overline{U},\overline{V},\overline{W},\overline{X})=0
\end{eqnarray*}
Applying Lemma \ref{lem.5}, the above relation implies that
$$\bar{d}\mu+\alpha\wedge\mu=0.$$
Consequently, in view of (\ref{eq.18}), we conclude that
${R}(\overline{U},\overline{V})\textbf{R}=0$.

\bigskip

\noindent\emph{\underline{Third step}}: \emph{$(M,L)$ is
$R^h$-recurrent  with the same recurrence form $\alpha$}:

\vspace{5pt}

We have, from the second step,
\begin{eqnarray}
  &&\textbf{R}( {R}(\overline{U},\overline{V})\overline{W},\overline{X},\overline{Y},\overline{Z})
  +\textbf{R}(\overline{W}, {R}(\overline{U},\overline{V})\overline{X},\overline{Y},\overline{Z})\nonumber \\
  &&+\textbf{R}(\overline{W},\overline{X}, {R}(\overline{U},\overline{V})\overline{Y},\overline{Z})
  +\textbf{R}(\overline{W},\overline{X},\overline{Y}, {R}(\overline{U},\overline{V})\overline{Z})=0\label{eq.19}.
\end{eqnarray}
Differentiating $h$-covariantly  both sides of the above relation
with respect to $\overline{\xi}$, we get
\begin{eqnarray*}
  &&(\nabla_{\beta \overline{\xi}}\textbf{R})( {R}(\overline{U},\overline{V})\overline{W},\overline{X},\overline{Y},\overline{Z})
  +\textbf{R}((\nabla_{\beta \overline{\xi}} {R})(\overline{U},\overline{V})\overline{W},\overline{X},\overline{Y},\overline{Z})\\
 && +(\nabla_{\beta \overline{\xi}}\textbf{R})(\overline{W}, {R}(\overline{U},\overline{V})\overline{X},\overline{Y},\overline{Z})
 +\textbf{R}(\overline{W},(\nabla_{\beta \overline{\xi}} {R})(\overline{U},\overline{V})\overline{X},\overline{Y},\overline{Z})\\
  &&+(\nabla_{\beta \overline{\xi}}\textbf{R})(\overline{W},\overline{X}, {R}(\overline{U},\overline{V})\overline{Y},\overline{Z})
  +\textbf{R}(\overline{W},\overline{X},(\nabla_{\beta \overline{\xi}} {R})(\overline{U},\overline{V})\overline{Y},\overline{Z})\\
  &&+(\nabla_{\beta \overline{\xi}}\textbf{R})(\overline{W},\overline{X},\overline{Y}, {R}(\overline{U},\overline{V})\overline{Z})
  +\textbf{R}(\overline{W},\overline{X},\overline{Y},(\nabla_{\beta \overline{\xi}} {R})(\overline{U},\overline{V})\overline{Z})=0.
\end{eqnarray*}
Applying Lemma \ref{lem.3}, we find
\begin{eqnarray*}
  &&(\alpha(\overline{\xi})\textbf{R}+\mu(\overline{\xi})\textbf{G})( {R}(\overline{U},\overline{V})\overline{W},\overline{X},\overline{Y},\overline{Z})
  +\textbf{R}((\alpha(\overline{\xi}) {R}+\mu(\overline{\xi}) {G})(\overline{U},\overline{V})\overline{W},\overline{X},\overline{Y},\overline{Z})\\
 && +(\alpha(\overline{\xi})\textbf{R}+\mu(\overline{\xi})\textbf{G})(\overline{W}, {R}(\overline{U},\overline{V})\overline{X},\overline{Y},\overline{Z})
 +\textbf{R}(\overline{W},(\alpha(\overline{\xi}) {R}+\mu(\overline{\xi}) {G})(\overline{U},\overline{V})\overline{X},\overline{Y},\overline{Z})\\
  &&+(\alpha(\overline{\xi})\textbf{R}+\mu(\overline{\xi})\textbf{G})(\overline{W},\overline{X}, {R}(\overline{U},\overline{V})\overline{Y},\overline{Z})
  +\textbf{R}(\overline{W},\overline{X},(\alpha(\overline{\xi}) {R}+\mu(\overline{\xi}) {G})(\overline{U},\overline{V})\overline{Y},\overline{Z})\\
  &&+(\alpha(\overline{\xi})\textbf{R}+\mu(\overline{\xi})\textbf{G})(\overline{W},\overline{X},\overline{Y}, {R}(\overline{U},\overline{V})\overline{Z})
  +\textbf{R}(\overline{W},\overline{X},\overline{Y},(\alpha(\overline{\xi}) {R}+\mu(\overline{\xi}) {G})(\overline{U},\overline{V})\overline{Z})=0.
\end{eqnarray*}
\par Now, let us assume that $\mu\neq0$ at a certain  point of $TM$. At
this point, using (\ref{eq.19}), the above equation reduces to
\begin{eqnarray*}
  &&\textbf{G}( {R}(\overline{U},\overline{V})\overline{W},\overline{X},\overline{Y},\overline{Z})+
  \textbf{R}( {G}(\overline{U},\overline{V})\overline{W},\overline{X},\overline{Y},\overline{Z})\\
  &&+\textbf{G}(\overline{W}, {R}(\overline{U},\overline{V})\overline{X},\overline{Y},\overline{Z})+\textbf{R}(\overline{W}, {G}(\overline{U},\overline{V})\overline{X},\overline{Y},\overline{Z})\\
  &&+\textbf{G}(\overline{W},\overline{X}, {R}(\overline{U},\overline{V})\overline{Y},\overline{Z})+\textbf{R}(\overline{W},\overline{X}, {G}(\overline{U},\overline{V})\overline{Y},\overline{Z})\\
  &&+\textbf{G}(\overline{W},\overline{X},\overline{Y}, {R}(\overline{U},\overline{V})\overline{Z})+\textbf{R}(\overline{W},\overline{X},\overline{Y}, {G}(\overline{U},\overline{V})\overline{Z})=0.
\end{eqnarray*}
Using the definition of $\textbf{G}$ and  $ {G}$, the last equality
takes the form
\begin{eqnarray*}
  &&g(\overline{V},\overline{W})\textbf{R}(\overline{U},\overline{X},\overline{Y},\overline{Z})-g(\overline{U},\overline{W})\textbf{R}(\overline{V},\overline{X},\overline{Y},\overline{Z})\\
  &&+g(\overline{V},\overline{X})\textbf{R}(\overline{W},\overline{U},\overline{Y},\overline{Z})-g(\overline{U},\overline{X})\textbf{R}(\overline{W},\overline{V},\overline{Y},\overline{Z})\\
  &&+g(\overline{V},\overline{Y})\textbf{R}(\overline{W},\overline{X},\overline{U},\overline{Z})-g(\overline{U},\overline{Y})\textbf{R}(\overline{W},\overline{X},\overline{V},\overline{Z})\\
  &&+g(\overline{V},\overline{Z})\textbf{R}(\overline{W},\overline{X},\overline{Y},\overline{U})-g(\overline{U},\overline{Z})\textbf{R}(\overline{W},\overline{X},\overline{Y},\overline{V})=0.
\end{eqnarray*}
Taking the   trace of the above equation with respect to the pair of
arguments $(\overline{V},\overline{W})$, we obtain
\begin{eqnarray*}
  &&(n-2)\textbf{R}(\overline{U},\overline{X},\overline{Y},\overline{Z})+\textbf{R}(\overline{Y},\overline{X},\overline{U},\overline{Z})+\textbf{R}(\overline{Z},\overline{X},\overline{Y},\overline{U})\\
  &&-g(\overline{U},\overline{Y})Ric(\overline{X},\overline{Z})+g(\overline{U},\overline{Z})Ric(\overline{X},\overline{Y})=0.
\end{eqnarray*}
This equation, using Lemma \ref{lem.4}\textbf{(a)}, reduces to
\begin{equation}\label{eq.20}
    (n-1)\textbf{R}(\overline{U},\overline{X},\overline{Y},\overline{Z})-
    g(\overline{U},\overline{Y})Ric(\overline{X},\overline{Z})+g(\overline{U},\overline{Z})Ric(\overline{X},\overline{Y})=0.
\end{equation}
Again, taking the trace of the above equation with respect to the
pair of arguments $\overline{X}$ and $\overline{Z}$, we get
$Ric=\frac{r}{n}\,g$, which when inserted to (\ref{eq.20}), gives
$$R=\frac{r}{n(n-1)}\,G.$$ Hence, the concircular curvature $C$
vanishes, which contradicts  our assumption. Therefore, $\mu=0$ at
every point on $TM$. Consequently, by lemma \ref{lem.3}, $(M,L)$ is
$R^h$-recurrent  with the same recurrence form $\alpha$.


\providecommand{\bysame}{\leavevmode\hbox
to3em{\hrulefill}\thinspace}
\providecommand{\MR}{\relax\ifhmode\unskip\space\fi MR }
\providecommand{\MRhref}[2]{%
  \href{http://www.ams.org/mathscinet-getitem?mr=#1}{#2}
} \providecommand{\href}[2]{#2}

\end{document}